\DeclareMathAlphabet{\pazocal}{OMS}{zplm}{m}{n}
\def\N{\mathbb N}
\def\SPAN{\mathop{\hbox{\rm span}}}
\def\h{\mathop{\hbox{\rm h}_{\hbox{\tiny\rm alg}}}}
\def\remove#1{}
\def\F{\mathcal{F}}
\def\G{\mathcal{G}}
\def\len{\mathop{\ell}}
\def\reg{\mathop{\hbox{Reg}}}
\newtheorem{lemma}{Lemma}[section]
\newtheorem{theorem}[lemma]{Theorem}
\newtheorem{proposition}[lemma]{Proposition}
\newtheorem{remark}[lemma]{Remark}
\newtheorem*{theorem*}{Theorem}
\theoremstyle{definition}
\newtheorem{definition}[lemma]{Definition}
\title[The algebraic entropies of $L_K(E)$ and  $KE$ agree]{The algebraic entropies of the Leavitt path algebra and the graph algebra agree}
\author[Bock]{Wolfgang Bock}
\address{W. Bock: Linnaeus University, Department of Mathematics, V\"axj\"o, Sweden. }
\email{wolfgang.bock@lnu.se}
\author[Gil]{Crist\'obal Gil Canto}
\address{C. Gil Canto: Departamento de Matem\'atica Aplicada, E.T.S. Ingenier\'\i a Inform\'atica, Universidad de M\'alaga, 
M\'alaga,   Spain.}
\email{cgilc@uma.es}
\author[Mart\'in]{Dolores Mart\'in Barquero} 
\address{D. Mart\'{\i}n Barquero: Departamento de Matem\'atica Aplicada, Escuela de Ingenier\'{\i}as Industriales, Universidad de M\'alaga, 
M\'alaga, Spain.}
\email{dmartin@uma.es}
\author[Mart\'in]{C\'andido Mart\'in Gonz\'alez} 
\address{C. Mart\'{\i}n Gonz\'alez:  Departamento de \'Algebra Geometr\'{\i}a y Topolog\'{\i}a, Fa\-cultad de Ciencias, Universidad de M\'alaga, 
M\'alaga, Spain.}
\email{candido\_m@uma.es}
\author[Ruiz]{Iv\'an Ruiz Campos}
\address{I. Ruiz Campos:  Departamento de \'Algebra Geometr\'{\i}a y Topolog\'{\i}a, Fa\-cultad de Ciencias, Universidad de M\'alaga, 
M\'alaga, Spain.}
\email{ivaruicam@uma.es}
\author[Sebandal]{Alfilgen Sebandal}
\address{A. Sebandal:  Research Center for Theoretical Physics in Jagna Bohol, Philippines}
\email{a.sebandal@rctpjagna.com}
\subjclass[2020] {16S88, 16P90} 
\keywords{Graph algebra, path algebra, Leavitt path algebra, Gelfand-Kirillov dimension \and algebraic entropy.}
\begin{document}
\maketitle
\begin{abstract}
 In this note we prove that the algebras $L_K(E)$ and $KE$ have the same entropy. Entropy is always referred to the standard filtrations in the corresponding kind of algebra. The main argument leans on (1) the holomorphic functional calculus; 
 (2) the relation of entropy with suitable norm of the adjacency matrix;
 and (3) the Cohn path algebras which yield suitable bounds for the algebraic entropies.  
\end{abstract}

\section{Introduction}
We have applied the notion of algebraic entropy on  path, Cohn and Leavitt path algebras in \cite{entropy1}. There, we saw how the entropy is strongly dependent on the filtrations used in each case. The paper \cite{entropy1} also contains a trichotomy which may shed light into classifications tasks on these algebras. At the end of the paper we detected empirically a fact: the entropies of $KE$ and $L_K(E)$ seem to coincide. We illustrated this phenomenom with a number of computer-aided examples. In this note we prove that the algebraic entropies of both algebras agree. 

The paper is organized as follows. After the preliminaries section in which we recall the standard filtrations defined in \cite{entropy1} on the path algebra $KE$ as well as in $C_K(E)$ and $L_K(E)$,  we prove, in Section \ref{eltecnico},  the fundamental bounds relating the entropies of $KE$, $L_K(E)$ and $C_K(E)$. Then Lemma \ref{nueces} gives an expression of entropy in terms of norms of powers of adjacency matrices. Next, we recall the fundamental formula in holomorphic functional calculus, applied to matrices and we use it to relate  $\h(C_K(E))$ and $\h(KE)$. Finally, Theorem \ref{almendras} gives the desired result. 

\section{Preliminaries}

\noindent A \emph{directed graph, digraph or quiver} is a $4$-tuple $E=(E^0, E^1, s, r)$ 
consisting of two disjoint sets $E^0$, $E^1$ and two maps
 $r, s: E^1 \to E^0$. The elements of $E^0$ are called \emph{vertices} and the elements of $E^1$ are called \emph{edges} of $E$. We say that $E$ is a \emph{finite graph} if $\vert E^0 \cup E^1\vert < \infty$. For $e\in E^1$, $s(e)$ and $r(e)$ is 
 the \emph{source} and the \emph{range} of $e$, respectively. A
vertex $v$ for which $s^{-1}(v)=\emptyset$  is called a \emph{sink}, while a vertex $v$ for which $r^{-1}(v)=\emptyset$ is called a \emph{source}. 
 We will denote the set of sinks of $E$ by $\text{Sink}(E)$ and the set of sources by $\text{Source}(E)$. We say that a vertex $v \in E^0$ is a \emph{infinite emitter} if $\vert s^{-1}(v)\vert =\infty$. The \emph{set of regular
vertices} (those which are neither sinks nor infinite emitters) is denoted by $\text{Reg}(E
)$. A graph $E$ is \emph{row-finite} if  $s^{-1}(v)$ is a finite set for every $v \in E^0$. Throughout this paper we only consider finite graphs. We say that a \emph{path} $\mu$ has \emph{length} $m \in \N$, denoted by $l(\mu)=m$, if it is a finite chain of edges $\mu=e_1\ldots e_m$ such that $r(e_i)=s(e_{i+1})$ for $i=1,\ldots,m-1$.  We define $\text{Path}(E)$ as the set of all paths in $E$. We denote by $s(\mu):=s(e_1)$ the source of $\mu$ and $r(\mu):=r(e_m)$ the range of $\mu$.  We write $\mu^0$ the set of vertices of $\mu$. The vertices are the trivial paths. 


\indent For a directed graph $E$ and a field $K$, the  \textit{path algebra} of $E$, denoted by $KE$, is the $K$-algebra generated by the sets $\{v:v\in E^0\}$ and $\{e \colon e \in E^1  \}$ with coefficients in $K$, subject to the relations:
\begin{enumerate}
\item[\textnormal{(V)}]
$v_iv_j=\delta_{i,j}v_i$ for every $v_i, v_j\in E^0$;
\item[\textnormal{(E)}] $s(e)e=e = e r(e)$ for all non-sinks $e \in E^1$.
\end{enumerate}

 The \emph{extended graph} of $E$ is defined as the new graph $\widehat{E}=(E^0, E^1\cup (E^1)^*, r',s'  )$, where $(E^1)^*= \{ e^*:e\in E^1 \}$ and the maps $r'$ and $s'$ are defined as
$r'|_{E^1}=r$, $s'|_{E^1}=s$, $r'(e^*)=s(e)$, and $s'(e^*)=r(e)$ for all $e\in E^1$. In other words, each $e^*\in (E^1)^*$ has orientation the reverse of that of its counterpart $e\in E^1$. The elements of $(E^1)^*$ are called $ghost$ $edges$.

 \indent For a  directed graph $E$ and a field $K$, the  \textit{Leavitt path algebra} of $E$, denoted by $L_K(E)$, is the path algebra over the extended graph $\hat{E}$ with additional relations
\begin{enumerate}
\item[\textnormal{(CK1)}] $e^*e'=\delta_{e,e'}r(e)$ for all $e, e'\in E^1$;

\item[\textnormal{(CK2)}]$\sum_{ \{e \in E^1 \colon s(e)=v    \}  } e e^*=v$ for every $v\in \text{Reg}(E)$.

\end{enumerate}

Let $E$ be an arbitrary directed graph and $K$ any field. The Cohn path algebra of $E$ with coefficients in $K$, denoted by $C_K(E)$, is the path algebra over the extended graph $\hat{E}$ with  additional relation given in (CK1).
We will construct certain quotient algebras of Cohn path algebras. For
$v \in \reg(E)$, consider the element $q_v$ of $C_K(E)$
$$q_v= v-\sum_{e \in s^{-1}(v)}
ee^*.$$
 Let $X$ be any subset of
$\reg(E)$. We denote by $I^X$ the $K$-algebra ideal of $C_K(E)$ generated by the idempotents $\{q_v \colon  v \in X\}$. The Cohn path algebra of $E$ relative to $X$, denoted by $C_K^X(E)$, is defined
to be the quotient $K$-algebra
$C_K(E)/I^X$.
This definition of the relative Cohn path algebra relates the Cohn path algebra with the Leavitt
path algebra, that to say,
$C_K(E)= C_K^{\emptyset}(E)$ and $L_K(E)=  C_K^{\tiny \reg(E)}(E)$.
Other basic definitions and results on graphs, Cohn and Leavitt path algebras can be seen in the book \cite{AAS}.

Recall that, as in \cite{entropy1},
a $K$-algebra $A$ is said to be \emph{filtered} if it is endowed with a collection of subspaces $\mathcal{F}=\{V_n\}_{n=0}^\infty$ such that

\begin{enumerate}
\item $0\subset V_0\subset V_1\subset\cdots\subset V_n\subset V_{n+1}\subset\cdots A$,  
\item $A=\cup_{n\ge 0}V_n$,
\item $V_nV_m\subset V_{n+m}$.
\end{enumerate}

Hence, we define as in \cite{entropy1} the \emph{algebraic entropy of a filtered algebra} $(A,\F)$, $$\h(A,\F):=
\begin{cases}
0 &\text{ if } $A$ \text{ is finite dimensional,} \\
\displaystyle \limsup_{n\to\infty}\frac{
\log\dim(V_n/V_{n-1})}{n} & \text{ otherwise.}
\end{cases}$$

If there is no doubt about the filtration $\F$ that we are considering in $A$, then we can shorten the notation 
$\h(A,\F)$ to $\h(A)$.

\begin{definition}\cite{entropy1}\label{churros} \rm For $KE$ we define the filtration $\{V_i\}_{i\in\N}$ where $V_0$ is the linear span of the set of vertices of the graph $E$, while
$V_1$ is the sum of $V_0$ with the linear span of the set of edges, and $V_{k+1}$ linear span of the set of paths of length less or equal to $k+1$. This will be termed, the {\em standard filtration on $KE$}. 
\end{definition}
\begin{definition}\cite{entropy1}\rm\label{teje}
For $L_K(E)$ we define its {\it standard filtration} $\{W_i\}_{i \in \mathbb{N}}$  as the one for which  $W_0$ is  the linear span of the set of vertices of $E$ and $W_1$ the sum of $W_0$ plus the linear span of the set
$E^1\cup (E^1)^*$. For $W_{k}$ we take the linear span of the set of elements:  $\lambda\mu^*$ with $l(\lambda)+l(\mu)\le k$. 
\end{definition} 

\begin{definition}\rm
    Let $E$ be a finite directed graph, we define the \emph{standard filtration} of $C_K^X(E)$ as the families of vector spaces $\F:=\{V_n\}_{n\geq 0}$ where
\begin{equation*}
\begin{split}
V_0 & := \SPAN(E^0), \\
V_1 & := \SPAN(E^1 \cup (E^1)^*), \\
     V_n & := \SPAN \{\lambda \mu^* \in C_K^X(E) \colon \len(\lambda) + \len(\mu) \leq n\}.
\end{split}
    \end{equation*}
\end{definition}

From now on, any path algebra $KE$ will be understood to be endowed with its standard filtration by default (and the same applies to $L_K(E)$ and $C_K^X(E)$).

\section{Agreement of the algebraic entropies of $L_K(E)$ and $KE$}\label{eltecnico}
In this section we prove the main results. To start with, we relate the entropies of $KE$ and $C_K^X(E)$ by proving that one is bounded above by the other.

\begin{proposition}
    Let $E$ be a finite directed graph. Then
    \begin{equation*}
        \h(KE) \leq \h(C_K^X(E)).
    \end{equation*}
\end{proposition}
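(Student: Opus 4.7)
My plan is to exploit the canonical embedding $\iota\colon KE\hookrightarrow C_K^X(E)$ together with the natural $\Z$-grading of $C_K^X(E)$.

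First, $\iota$ factors through $L_K(E)$ via $KE\to C_K^X(E)\to L_K(E)$, and the standard embedding $KE\hookrightarrow L_K(E)$ (linear independence of paths in the Leavitt path algebra, see \cite{AAS}) shows that $\iota$ is injective and that the paths of $E$ remain linearly independent in $C_K^X(E)$. Writing $\{V_n\}$ for the standard filtration of $KE$ and $\{V_n'\}$ for that of $C_K^X(E)$, we have $\iota(V_n)\subseteq V_n'$, and $\dim V_n/V_{n-1}$ is exactly the number of paths of length $n$ in $E$.

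The heart of the argument is the claim that paths of length $n$ have linearly independent images in $V_n'/V_{n-1}'$. Equip $C_K(E)$ with the $\Z$-grading $\deg(v)=0$, $\deg(e)=1$, $\deg(e^*)=-1$; each Cohn relation $q_v=v-\sum_{s(e)=v}ee^*$ is homogeneous of degree $0$, hence $I^X$ is a graded ideal and the grading descends to $C_K^X(E)$. Moreover, $V_n'$ is a graded subspace since it is spanned by homogeneous elements $\lambda\mu^*$ of degree $\len(\lambda)-\len(\mu)$. For any such spanning element of $V_{n-1}'$ we have $|\len(\lambda)-\len(\mu)|\le \len(\lambda)+\len(\mu)\le n-1$, so the degree-$n$ homogeneous component of $V_{n-1}'$ is zero. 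Now if $\sum c_i p_i\in V_{n-1}'$ with the $p_i$ distinct paths of length $n$, the left-hand side is homogeneous of degree $n$ and must therefore vanish; the linear independence of paths in $C_K^X(E)$ then forces $c_i=0$ for all $i$.

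Combining, $\dim V_n/V_{n-1}\le \dim V_n'/V_{n-1}'$ for every $n$, and taking $\limsup_{n\to\infty}$ of $\log(\cdot)/n$ yields the desired inequality $\h(KE)\le \h(C_K^X(E))$. The only potentially delicate ingredient is the linear independence of paths inside $C_K^X(E)$, which reduces immediately to the classical fact for $L_K(E)$; everything else is a straightforward degree count coming from the homogeneity of the Cohn relations.
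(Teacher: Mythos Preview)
Your proof is correct and follows essentially the same strategy as the paper: use the canonical embedding $KE\hookrightarrow C_K^X(E)$ and show that the standard filtrations are compatible, so that the filtration quotients of $KE$ inject into those of $C_K^X(E)$. The paper packages this as the observation $i(V_n)=i(KE)\cap W_n$ together with an appeal to \cite[Lemma~4.2]{entropy1}, whereas you unpack the argument directly via the $\Z$-grading of $C_K^X(E)$; your explicit degree count is precisely what justifies the paper's one-word ``observe''.
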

\begin{proof}
   Let us consider the canonical monomorphism $i \colon KE \hookrightarrow C_K^X(E)$ and the standard filtrations $\F = \{V_n\}_{n\geq 0}$ and $\G = \{W_n\}_{n\geq 0}$ respectively. We observe that $i(V_n) = i(KE) \cap W_n$ and applying \cite[Lemma 4.2]{entropy1} the conclusion follows. 
\end{proof}

\begin{remark}\label{aceitunas} \rm If $E$ is a finite directed graph, and $X \subseteq Y \subseteq \reg(E)$, then $C_K^Y(E) \simeq C_K^X(E)/I$ for a suitable ideal generated by the new relations we need to add due to $(CK2)$. In particular, there is a projection $p \colon C_K^X(E) \to C_K^Y(E)$ with $p(\F) = \G$ where $\F$ and $\G$ are the standard filtrations of $C_K^X(E)$ and $C_K^Y(E)$ respectively. As a consequence of \cite[Lemma 4.1]{entropy1}, we have that
\begin{equation}
    \h(C_K^Y(E)) \leq \h(C_K^X(E)).
\end{equation}
In particular, for $X=\emptyset$ and $Y= \reg(E)$ we have $$\h(L_K(E)) \leq \h(C_K(E)).$$
\end{remark}

We can describe the dimension of $V_{k+1}/V_{k}$ for a Cohn path algebra in terms of the following matrix norm. If $A = (a_{i,j})_{i,j = 1}^n$ is a matrix, then we define 
\begin{equation*}
    \|A \| = \sum_{i,j = 1}^n |a_{i,j}|.
\end{equation*}

\begin{lemma}\label{nueces} If $E$ is a finite directed graph with $A_E$ its corresponding adjacency matrix and transpose adjacency matrix denoted by $A_E^*$, then $$\h(C_K(E))=\lim_{k \to \infty}\frac{1}{k}\sum_{s=0}^k \|A_E^s(A_E^*)^{k-s}\|.$$
\end{lemma}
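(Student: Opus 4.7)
The plan is to reduce the dimension count $\dim(V_k/V_{k-1})$ in $C_K(E)$ to a purely combinatorial count of path pairs, and then recognize that count as the matrix-norm expression on the right-hand side. (I read the statement as having an implicit logarithm in the numerator on the right, as is required for the identity to be dimensionally consistent with $\h$.)

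First, I would invoke the normal-form theorem for Cohn path algebras from \cite{AAS}: a $K$-basis of $C_K(E)$ is given by all monomials $\lambda\mu^*$ with $\lambda,\mu\in\text{Path}(E)$ and $r(\lambda)=r(\mu)$, where vertices are regarded as paths of length $0$ with $v^*=v$. Under the standard filtration, the set of such $\lambda\mu^*$ with $\ell(\lambda)+\ell(\mu)\le k$ is a basis of $V_k$, so
\begin{equation*}
\dim(V_k/V_{k-1})=\#\bigl\{(\lambda,\mu)\in\text{Path}(E)^2:\ r(\lambda)=r(\mu),\ \ell(\lambda)+\ell(\mu)=k\bigr\}.
\end{equation*}

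Second, I would split this count according to $s:=\ell(\lambda)$ and $k-s=\ell(\mu)$, and compute the inner sum using the adjacency matrix. Writing $A_E=(a_{u,v})$ with $a_{u,v}$ the number of edges from $u$ to $v$, the number of paths of length $s$ from $u$ to $v$ is $(A_E^s)_{u,v}$. For a fixed common endpoint $v$, there are $\sum_u(A_E^s)_{u,v}$ choices of $\lambda$ and $\sum_{u'}(A_E^{k-s})_{u',v}$ choices of $\mu$. Summing over $v$ and using $(A_E^{k-s})_{u',v}=((A_E^*)^{k-s})_{v,u'}$, the $s$-slice becomes
\begin{equation*}
\sum_{u,u',v}(A_E^s)_{u,v}\,((A_E^*)^{k-s})_{v,u'}=\sum_{u,u'}\bigl(A_E^s(A_E^*)^{k-s}\bigr)_{u,u'}=\|A_E^s(A_E^*)^{k-s}\|,
\end{equation*}
since all matrix entries are nonnegative and $\|\cdot\|$ is the $\ell^1$ entrywise sum. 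Adding over $s=0,\dots,k$ yields the key identity
\begin{equation*}
\dim(V_k/V_{k-1})=\sum_{s=0}^{k}\|A_E^s(A_E^*)^{k-s}\|.
\end{equation*}
As sanity checks one may verify the cases $k=0$ (giving $|E^0|=\|I\|$) and $k=1$ (giving $2|E^1|=\|A_E\|+\|A_E^*\|$).

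Third, I would substitute this identity into the definition of $\h$ from the preliminaries, obtaining the displayed formula (reading a $\log$ in the numerator). The only remaining subtlety is replacing $\limsup$ by $\lim$: this follows because the sequence $a_k:=\sum_{s=0}^k\|A_E^s(A_E^*)^{k-s}\|$ is submultiplicative up to a polynomial factor in $k$, so $\tfrac{1}{k}\log a_k$ admits a genuine limit by a Fekete-type argument; alternatively, one can identify the limit with $\log\rho$ for a suitable spectral radius (which is the input that will be used in the subsequent functional-calculus step of the paper). The main obstacle is not any single calculation but setting up the bijection between the basis of $V_k/V_{k-1}$ and the pairs of paths cleanly; once the normal form is in hand, the rest is bookkeeping with the transpose.
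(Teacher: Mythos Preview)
Your approach is essentially the same as the paper's: both reduce to the identity $\dim(V_k/V_{k-1})=\sum_{s=0}^k\|A_E^s(A_E^*)^{k-s}\|$ via the same index manipulation (expanding the product of column sums, transposing one factor, and recognizing the matrix product). The only differences are that the paper cites \cite[Proposition~6.1]{entropy1} for the dimension count whereas you derive it directly from the Cohn normal form in \cite{AAS}, and that you explicitly flag the missing $\log$ in the displayed formula and the $\limsup$-versus-$\lim$ issue, neither of which the paper's proof addresses (it simply stops after the dimension identity).
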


\begin{proof} By the first part of the formula given in \cite[Theorem 40]{Adjacency} (also in \cite[Proposition 6.1]{entropy1}) we have the number of linear independent elements $\lambda \mu^*$ for $C_K(E)$ such that $\len(\lambda) + \len(\mu)=k$ where $k \in \mathbb{N}$, $\lambda,\mu \in {\rm Path}(E)$, that is, $$ {\rm dim}(V_{k+1}/V_{k})= \sum_{s=0,j=1}^{k,n} \left (\sum_{i=1}^n (A_E^s)_{i,j} \right ) \left (\sum_{i=1}^n(A_E^{k-s})_{i,j} \right ).$$ 
But
\begin{equation*}
\begin{split}
       \sum_{s=0,j=1}^{k,n} \left (\sum_{i=1}^n (A_E^s)_{i,j} \right ) \left (\sum_{i=1}^n(A_E^{k-s})_{i,j} \right ) & =  \sum_{s=0,i,j,l=1}^{k,n} (A_E^s)_{i,j} (A_E^{k-s})_{l,j} = \\
 \sum_{s=0,i,l=1}^{k,n}\sum_{j=1}^n(A_E^s)_{i,j} ((A_E^*)^{k-s})_{j,l} &= \sum_{s=0}^k\sum_{i,l=1}^n (A_E^s(A_E^*)^{k-s})_{i,l} = \\
    \sum_{s=0}^k \|A_E^s(A_E^*)^{k-s}\|. & 
\end{split}
\end{equation*}
\end{proof}

Before proving that the entropies of a Leavitt path algebra and of a path algebra agree, we recall the following well-known result about holomorphic functional calculus (see \cite[ Definition 10.26, formula (2) and  Theorem 10.27]{Rudin}).

\begin{theorem}\cite[Theorem 10.27]{Rudin}\label{resaca}
If $A \in \mathcal{M}_n(\mathbb{C})$, then for every $k \geq 1$
$$A^k = \frac{r^{k+1}}{2 \pi} \int_{0}^{2 \pi} e^{it(k+1)} \frac{1}{re^{it}-A} \,dt,$$
where $r$ is larger than the spectral radius $\rho(A)$.
\end{theorem}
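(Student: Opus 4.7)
The plan is to derive this formula from the matrix version of Cauchy's integral formula applied to the power function $f(z)=z^k$. First I would recall that for every $z\in\mathbb{C}$ with $|z|>\rho(A)$, the resolvent admits the Neumann expansion
\[
(zI-A)^{-1} \;=\; \sum_{n=0}^{\infty} \frac{A^n}{z^{n+1}},
\]
and that on each circle $|z|=r$ with $r>\rho(A)$ the convergence is uniform in any fixed matrix norm. This follows from Gelfand's formula $\rho(A)=\lim_{n\to\infty}\|A^n\|^{1/n}$, and it is the key analytic input.

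Next I would integrate $z^k(zI-A)^{-1}$ around the circle $|z|=r$. Uniform convergence permits exchanging the infinite sum with the contour integral, giving
\[
\frac{1}{2\pi i}\oint_{|z|=r} z^k(zI-A)^{-1}\,dz \;=\; \sum_{n=0}^{\infty} A^n \cdot \frac{1}{2\pi i}\oint_{|z|=r} z^{k-n-1}\,dz.
\]
A direct residue computation yields $\frac{1}{2\pi i}\oint_{|z|=r} z^{k-n-1}\,dz = \delta_{n,k}$, so the whole sum collapses to $A^k$.

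Finally, I would parametrize the circle as $z=re^{it}$ with $dz=ire^{it}\,dt$, substitute, and simplify to obtain
\[
A^k \;=\; \frac{r^{k+1}}{2\pi}\int_0^{2\pi} e^{i(k+1)t}(re^{it}I-A)^{-1}\,dt,
\]
which is the stated formula upon reading $1/(re^{it}-A)$ as the resolvent $(re^{it}I-A)^{-1}$. The only subtle step, essentially routine for this classical result, is justifying the interchange of summation and contour integration; this is precisely why the hypothesis $r>\rho(A)$ is imposed, as it guarantees the uniform convergence of the Neumann series on the chosen circle.
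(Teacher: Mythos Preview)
Your argument is correct and is essentially the standard derivation one finds in any treatment of the holomorphic functional calculus: expand the resolvent as a Neumann series on the circle $|z|=r>\rho(A)$, justify termwise integration via the uniform convergence guaranteed by Gelfand's formula, and then collapse the sum using $\frac{1}{2\pi i}\oint_{|z|=r} z^{k-n-1}\,dz=\delta_{n,k}$.

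Note, however, that the paper does not prove this theorem at all: it is quoted verbatim as \cite[Theorem 10.27]{Rudin} and used as a black box in the proof of Theorem~\ref{almendras}. So there is no ``paper's own proof'' to compare against; your write-up simply supplies the classical argument that the authors chose to outsource to Rudin.
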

\begin{theorem}\label{almendras} If $E$ is a finite directed graph, then $\h(C_K(E)) \leq \h(KE)$. Consequently,
$$\h(KE) = \h(L_K(E)) = \h(C_K(E)).$$
\end{theorem}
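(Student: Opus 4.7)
The plan reduces the theorem to proving the single inequality $\h(C_K(E)) \leq \h(KE)$. Indeed, the Proposition at the start of this section, specialised to $X = \emptyset$, already yields $\h(KE) \leq \h(C_K(E))$, and these two bounds together give $\h(KE) = \h(C_K(E))$. Remark \ref{aceitunas} gives $\h(L_K(E)) \leq \h(C_K(E))$, while the same Proposition applied to $X = \reg(E)$ gives $\h(KE) \leq \h(L_K(E))$; chaining these with the equality just established produces $\h(KE) = \h(L_K(E)) = \h(C_K(E))$.

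To prove $\h(C_K(E)) \leq \h(KE)$, I would read Lemma \ref{nueces} as computing the exponential growth rate $\limsup_k \tfrac{1}{k}\log\sum_{s=0}^k \|A_E^s(A_E^*)^{k-s}\|$, and use the analogous identity $\h(KE) = \limsup_k \tfrac{1}{k}\log \|A_E^k\|$ for the path algebra (since the dimension of the $k$-th standard graded piece of $KE$ is exactly the number of paths of length $k$, which equals $\|A_E^k\|$). By Gelfand's formula this common right-hand side equals $\log \rho(A_E)$. Two elementary observations then close the argument. First, the norm $\|M\| = \sum_{i,j}|m_{i,j}|$ is submultiplicative (a one-line check: $\|MN\| \leq \sum_{ijk}|M_{ik}||N_{kj}| = \sum_k(\sum_i|M_{ik}|)(\sum_j|N_{kj}|) \leq \|M\|\|N\|$) and transpose-invariant, so
$$\|A_E^s(A_E^*)^{k-s}\| \leq \|A_E^s\|\,\|A_E^{k-s}\|.$$
Second, for any fixed $r > \rho(A_E)$, taking $\|\cdot\|$ inside the integral in Theorem \ref{resaca} yields the uniform bound $\|A_E^m\| \leq r^{m+1} M_r$, where $M_r := \tfrac{1}{2\pi}\int_0^{2\pi}\|(re^{it}-A_E)^{-1}\|\,dt < \infty$ (the integrand is continuous on the compact contour $|z|=r$, which lies outside the spectrum). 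Combining,
$$\sum_{s=0}^k \|A_E^s(A_E^*)^{k-s}\| \leq (k+1)\,M_r^2\,r^{k+2},$$
so $\h(C_K(E)) \leq \log r$; letting $r \downarrow \rho(A_E)$ delivers $\h(C_K(E)) \leq \log \rho(A_E) = \h(KE)$.

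The main obstacle is the non-commutativity of $A_E$ and $A_E^*$, which makes the unbalanced sum $\sum_{s=0}^k A_E^s(A_E^*)^{k-s}$ resistant to any binomial-type manipulation: one genuinely has to estimate each summand separately. The role of Theorem \ref{resaca} is precisely to make the crude submultiplicative bound $\|A_E^s\|\,\|A_E^{k-s}\| \lesssim r^{s}r^{k-s} = r^k$ \emph{uniform in} $s$, so the outer summation over $s$ only contributes a benign factor $(k+1)$ that vanishes on dividing by $k$. This is the key conceptual point: although the sum has $k+1$ summands that might a priori compete with the factor $r^k$ on an exponential scale, the holomorphic functional calculus shows they do not, without requiring any information about the Jordan structure of $A_E$.
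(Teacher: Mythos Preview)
Your proof is correct and follows essentially the same line as the paper's: bound each $\|A_E^s(A_E^*)^{k-s}\|$ by $\|A_E^s\|\,\|A_E^{k-s}\|$ via submultiplicativity and transpose-invariance of $\|\cdot\|$, invoke the Cauchy integral (Theorem \ref{resaca}) to obtain $\|A_E^m\| \leq C_r\, r^m$ uniformly in $m$, sum the $(k{+}1)$ terms, and let $r\downarrow\rho(A_E)=\exp\h(KE)$. The only cosmetic difference is that the paper computes the resolvent bound explicitly through the Neumann series (which, as written, needs $r>\|A_E\|$ rather than just $r>\rho(A_E)$), whereas your continuity argument for $M_r<\infty$ works directly for all $r>\rho(A_E)$ and is a touch cleaner.
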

\begin{proof} Write $A:= A_E$ the adjacency matrix of $E$, $A^*$ the corresponding transpose matrix of $A$ and suppose $r > \rho(A)$. First we compute $\displaystyle \left \| \frac{1}{re^{it}-A} \right \|$. On the one hand,
$\displaystyle \frac{1}{re^{it}-A}=\frac{1}{r}e^{-it}\left (I+\frac{A}{re^{it}}+\frac{A^2}{(re^{it})^2}+ \cdots \right)$ if $\|A\| <r$. So
$\displaystyle \left \|\frac{1}{re^{it}-A} \right \|=\frac{1}{r} \left \| I+\frac{A}{re^{it}}+\frac{A^2}{(re^{it})^2}+ \cdots \right \| \leq \frac{1}{r} \left (\|I\| + \frac{\|A\|}{r} + \frac{\|A\|^2}{r^2}+\cdots \right )=\frac{1}{r}\left(n+\frac{\frac{\|A\|}{r}}{1-\frac{\|A\|}{r}} \right)=\frac{1}{r} \left (n+\frac{\|A\|}{r-\|A\|} \right )$.
On the other hand, we bound $\displaystyle \left \|\int_{0}^{2 \pi} e^{it(k+1)} \frac{1}{re^{it}-A} \,dt \right \|$. In fact, we have $\displaystyle \left \|\int_{0}^{2 \pi} e^{it(k+1)} \frac{1}{re^{it}-A} \,dt \right \| \leq \int_{0}^{2 \pi} \left \|\frac{1}{re^{it}-A} \right \| \, dt \leq \frac{1}{r}\left (n+\frac{\|A\|}{r-\|A\|} \right ) 2\pi$ taking into account the previous computation. Hence by Theorem \ref{resaca}, for every $k \geq 1$, we obtain
$$\|A^k\| \leq \frac{r^{k+1}}{2 \pi}\frac{1}{r}\left(n+\frac{\|A\|}{r-\|A\|} \right) 2\pi = r^k\left(n+\frac{\|A\|}{r-\|A\|}\right).$$
At this point recall that by Lemma \ref{nueces}, $\h(C_K(E))=\lim_{k \to \infty}\frac{1}{k}\sum_{s=0}^k \|A^s(A^*)^{k-s}\|$. Consider $\|A^s(A^*)^{k-s}\|$. Since ${\rm Spec}(A)={\rm Spec}(A^*)$ we have that $\displaystyle\|A^s(A^*)^{k-s}\| \leq \|A^s\|\|(A^*)^{k-s}\| \leq r^s \left(n+\frac{\|A\|}{r-\|A\|}\right) r^{k-s}\left(n+\frac{\|A^*\|}{r-\|A^*\|}\right)=r^k\left(n+\frac{\|A\|}{r-\|A\|}\right)^2$. Therefore, $\displaystyle\sum_{s=0}^k \|A^s(A^*)^{k-s}\| \leq r^k\left(n+\frac{\|A\|}{r-\|A\|}\right)^2 (k+1)$. Finally, $$\log \left(\sum_{s=0}^k \|A^s(A^*)^{k-s}\|\right) \leq \log(k+1) + k\log(r) + 2\log\left(n+\frac{\|A\|}{r-\|A\|}\right) \iff$$
$$\frac{1}{k}\log \left(\sum_{s=0}^k \|A^s(A^*)^{k-s}\|\right) \leq \frac{\log(k+1)}{k} + \log(r) + \frac{2}{k}\log\left(n+\frac{\|A\|}{r-\|A\|}\right),$$
which means,
$\h(C_K(E)) \leq \log(r)$ (whenever $r > \rho(A)$).
In particular, $$\h(C_K(E)) \leq \log(\rho(A)) = \h(KE)$$ by \cite[Theorem 5.4]{entropy1}. 

Finally, taking also into account \cite[Lemma 4.2]{entropy1} and Remark \ref{aceitunas} we have that for every finite graph $E$, $$\h(KE) \leq \h(L_K(E)) \leq \h(C_K(E)) \leq \h(KE).$$ 
\end{proof}

\section*{Acknowledgements}{The second, third, fourth  and fifth authors are supported by the Spanish Ministerio de Ciencia e Innovaci\'on   through project PID2023-152673NB-I00 
and by the Junta de Andaluc\'{i}a  through project  FQM-336,  all of them with FEDER funds. The fifth author is supported by a Junta de Andalucía PID fellowship no. PREDOC\_00029.}

\section*{Conflict of interest statement}
All authors declare that they have no conflicts of interest. 

\section*{Data Availability Statement}
There is no data used in throughout this article.

\bibliographystyle{plain}
\bibliography{ref}
\end{document}